\newtheorem{theorem}{Theorem}
\newtheorem{lemma}[theorem]{Lemma}
\newtheorem{corollary}[theorem]{Corollary}
\newtheorem{prop}[theorem]{Proposition}
\newtheorem{remark}{Remark}
\newtheorem{fact}{Fact}
\newcommand{\inftynorm}[1]{\left\lVert#1\right\rVert_\infty}
\newcommand{\zeronorm}[1]{\left\lVert#1\right\rVert_0}
\newcommand{\rounddown}[1]{\left\lfloor#1\right\rfloor}
\newcommand{\roundup}[1]{\left\lceil#1\right\rceil}
\newcommand{\Z}{\mathbb{Z}}
\newcommand{\R}{\mathbb{R}}
\DeclareMathOperator{\intcone}{int\_cone}
\DeclareMathOperator{\cone}{cone}
\title{On the Smallest Support Size of Integer Solutions to Linear Equations}
\author{Yatharth Dubey\thanks{Amazon. Email: ydubey0711@gmail.com}}
\author{Siyue Liu\thanks{Tepper School of Business, Carnegie Mellon University. Email: siyueliu@andrew.cmu.edu}}
\affil{}
\date{}
\begin{document}
\maketitle

\begin{abstract}
In this note, we study the size of the support of integer solutions to linear equations $Ax=b, ~x\in\Z^n$ where $A\in\Z^{m\times n}, b\in\Z^n$. 
We give an upper bound on the smallest support size as a function of $A$, taken as a worst case over all $b$ such that the above system has a solution. This bound is asymptotically tight, and in fact matches the bound given in Aliev, Averkov, De Loera and Oertel \cite{aliev2022sparse}, while the proof presented here is simpler, relying only on linear algebra. 
\end{abstract}

\section{Introduction}
\label{sec:intro}
The main goal of this work is to establish upper and lower bounds on the smallest support size of integer solutions to a system of linear equations. Let $A \in \Z^{m \times n}$ be a matrix and denote by $\mathcal{L}(A) :=\{Ax\mid x\in\Z^n\}$ the lattice generated by the columns of $A$. For any $b\in \mathcal{L}(A)$, we want to find an integer solution to $Ax=b$ with smallest support size, i.e.
\begin{equation}\label{eq:main_prob}
    \min~\{\|x\|_0 \mid Ax = b, x \in \Z^n\},
\end{equation}
where $\|x\|_0 := |\{j : x_j \not = 0\}|$. We will assume without loss of generality that $A$ has full row rank, i.e. $\text{rank}(A) = m$, because the system is trivially satisfied if and only if the equations corresponding to independent rows of $A$ are satisfied. 
In this note, we study the largest value of \eqref{eq:main_prob} taken over all points $b$ in the lattice $\mathcal{L}(A)$. To be precise, for matrix $A\in\Z^{m\times n}$, define 
\begin{equation}
\label{eq:def:min:support}
    f(A):=\max\limits_{b\in \mathcal{L}(A)}\min~\{\|x\|_0 \mid Ax = b, x \in \Z^n\}.
\end{equation}
This quantity has been the focus of several previous studies. For example, in \cite{aliev2022sparse}, $f(A)$ is called the \textit{integer linear rank} of $A$, denoted by $ILR(A)$. According to \cite{aliev2022sparse}, determining $f(A)$ is NP-hard. Their proof actually implies that given both $A,b$ as inputs, determining the value of \eqref{eq:main_prob} is NP-hard. Nontrivial bounds on problem \eqref{eq:def:min:support} have been leveraged in many areas of discrete optimization; see, for example, references in Section 1 of \cite{aliev2022sparse} and in the introduction of \cite{aliev2018support}. 

As a result, there have been several studies focused on achieving increasingly strong upper bounds on the quantity (\ref{eq:def:min:support}). Currently, the best available upper bound of $f(A)$ is achieved in \cite{aliev2022sparse}; let $f_{AADLO}(A)$ denote this bound. Let $A$ be an integer matrix with $m$ rows and at least $m$ columns, with largest absolute entry denoted by $\|A\|_\infty$. Their bound is parameterized by the following quantities. 
\begin{equation*}
\begin{aligned}
    \Gamma(A):=&\max\limits ~\{|\det(B)|\mid \text{$B$ is an $m\times m$ submatrix of $A$}\};\\
    \gcd(A):=&\gcd ~\{|\det(B)|\mid \text{$B$ is an $m\times m$ submatrix of $A$}\}.
\end{aligned}
\end{equation*}
The main contribution of this work is the following theorem, which essentially matches the state-of-the-art bound $f_{AADLO}(A)$.
\begin{theorem}
\label{thm:main}
    Let $A$ be an integer matrix with $m$ rows and full row rank. Let $p_i$ be the $i$-th prime. Then, the following relation between $\Gamma(A)$, $\gcd(A)$ and $f(A)$ holds.
    \begin{equation}
    \label{eq:integer:equation:bound}
        \frac{\Gamma(A)}{\gcd(A)}\geq p_2^m p_3^m\cdots p_{\rounddown{\frac{f(A)}{m}}}^{m}p_{\roundup{\frac{f(A)}{m}}}^{f(A)-m\rounddown{\frac{f(A)}{m}}}.
    \end{equation}
    Furthermore, the bound is asymptotically tight in the sense that for arbitrarily large integers $m$ and $t$, there exist an integer matrix $A$ with $m$ rows, full row rank, $\inftynorm{A}>t$, and a vector $b\in\mathcal{L}(A)$ such that equality holds in \eqref{eq:integer:equation:bound}.
\end{theorem}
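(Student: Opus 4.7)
I would begin by reducing to the normalized case $\gcd(A) = 1$, i.e., $\mathcal{L}(A) = \mathbb{Z}^m$. This is achieved via the Smith normal form $A = PDQ$, with $P, Q$ unimodular and $D$ diagonal with entries $d_1,\ldots,d_m$: replacing $A$ with $A' := Q_{[1:m]}$ (the top $m$ rows of $Q$) preserves $f(A)$ and leaves the ratio $\Gamma(A)/\gcd(A)$ unchanged, while ensuring $\gcd(A') = 1$. Writing $k := f(A)$, $q := \lfloor k/m\rfloor$, and $r := k - mq$, the goal reduces to: if $\gcd(A) = 1$ and $f(A) \geq k$, then $\Gamma(A) \geq p_2^m \cdots p_q^m\, p_{q+1}^r$.

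Fix a witness $b \in \mathbb{Z}^m$ with minimum integer support exactly $k$. For every $T \subseteq [n]$ with $|T| < k$, the condition $b \notin \mathcal{L}(A_T)$ translates to a prime-level obstruction: there exist a prime $p$ and an integer linear functional $\phi\colon\mathbb{Z}^m\to\mathbb{Z}$ with $\phi(a_i) \equiv 0 \pmod p$ for every $i \in T$ and $\phi(b) \not\equiv 0 \pmod p$. (When $A_T$ has rank $m$, $\phi$ comes from a nontrivial character of the finite quotient $\mathbb{Z}^m/\mathcal{L}(A_T)$; when the rank is lower, $\phi$ comes from a rational linear relation.) Consequently, the family of sets $I_{p,\phi} := \{i : \phi(a_i) \equiv 0 \pmod p\}$, ranging over valid pairs $(p,\phi)$, covers all $(k-1)$-subsets of column indices.

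The heart of the proof is to bootstrap this covering into a lower bound on $\Gamma(A)$. I would process primes in increasing order $p_2, p_3, \ldots, p_{q+1}$; at stage $i$, select $m$ columns (or $r$ columns at the last stage) from some $I_{p_i,\phi}$ whose $\mathbb{F}_{p_i}$-vanishing forces $p_i^m$ (respectively $p_i^r$) to divide a prescribed $m \times m$ minor. Accumulating contributions across primes yields $\Gamma(A) \geq p_2^m \cdots p_q^m\, p_{q+1}^r$. The main obstacle is ensuring that the prime contributions all land in a single $m \times m$ submatrix rather than being spread across several; I plan to address this via a matching argument (e.g.\ Hall's theorem) pairing primes with coordinate directions, emulating the block-diagonal structure of the extremal example. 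An alternative route is induction on $k$: after a unimodular column operation extracting a $p_i^m$ (resp.\ $p_i^r$) factor, reduce to an instance with $k' = k - m$ (resp.\ $k - r$) and apply the hypothesis.

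For the tightness claim, I would exhibit the block-diagonal $m \times k$ matrix consisting of $r$ row-blocks of size $1 \times (q+1)$ and $m - r$ row-blocks of size $1 \times q$, the columns of the $j$-th block being $\{\prod_{i \neq l} p_i : l = 1, \ldots, q_j\}$. Direct calculation shows $\gcd(A) = 1$ and $\Gamma(A) = p_2^m \cdots p_q^m\, p_{q+1}^r$ (the maximum $m$-minor is achieved by picking from each block the column that omits the smallest prime $p_1 = 2$); moreover $b = (1, 1, \ldots, 1)^T$ realizes $f(A) = k$, since each $1 \times q_j$ block requires support exactly $q_j$ to express $1$, reducing to the $m = 1$ tight case applied to each block independently.
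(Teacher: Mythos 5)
Your tightness construction is essentially the paper's own example (blocks of columns $\prod_{i\neq l}p_i$ with $b$ forcing full support in each block), and that part is fine. The problem is the main inequality: what you present is a plan whose central step you explicitly defer (``the main obstacle is ensuring that the prime contributions all land in a single $m\times m$ minor \dots\ I plan to address this via a matching argument''), and the plan as sketched rests on two claims that are not correct. First, the ``prime-level obstruction'' fails: $b\notin\mathcal{L}(A_T)$ does \emph{not} yield a prime $p$ and an integer functional $\phi$ with $\phi(a_i)\equiv 0\pmod p$ for $i\in T$ and $\phi(b)\not\equiv 0\pmod p$. Take $m=1$, $A_T=(4)$, $b=2$: any $\phi$ is multiplication by $c$, and $p\mid 4c$ with $p\nmid 2c$ forces $p=2$, which is impossible. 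The obstruction genuinely lives at prime-\emph{power} level, which destroys the clean covering of $(k-1)$-subsets by sets $I_{p,\phi}$. Second, even where such a $\phi$ exists, having $\phi(a_j)\equiv 0\pmod p$ for $m$ columns $j\in\tau$ gives only \emph{one} linear dependence among the rows of $A_\tau$ over $\mathbb{F}_p$, hence only $p^1\mid\det(A_\tau)$, not $p^m$; to extract $p^m$ you would need $m$ independent functionals, i.e.\ $A_\tau\equiv 0\pmod p$ entirely. So the accumulation $\Gamma(A)\geq p_2^m\cdots p_q^m p_{q+1}^r$ does not follow from the ingredients you have assembled, and the Hall-type matching you invoke is not an obviously repairable detail but the entire content of the theorem.

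For comparison, the paper resolves exactly these two difficulties by working on the kernel side rather than the functional side. One first restricts $A$ to the support of a minimal-support solution for the worst $b$, so that $n=f(A)$ and the columns are \emph{integrally independent} (no integer kernel vector has an entry equal to $1$). Writing $AU=\begin{pmatrix}D&0\end{pmatrix}$ in Hermite normal form with $U=\begin{pmatrix}U_1&U_2\end{pmatrix}$, the columns of $U_2$ span $\ker A\cap\Z^n$; integral independence forces every \emph{row} of $U_2$ to be divisible by some prime $q_i\geq 2$, and the fact that the $(n-m)\times(n-m)$ minors of $U_2$ have GCD $1$ caps the multiplicity of each prime at $m$ (and lets one avoid the prime $2$ altogether). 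A single nonsingular $(n-m)\times(n-m)$ submatrix $V$ of $U_2$ then absorbs the whole product $\prod_i q_i$ \emph{row by row} --- this is precisely where ``all the primes land in one minor'' is achieved for free --- and Jacobi's identity converts $|\det V|$ into $|\det(A_{[m]\times J})|/\gcd(A)$ for a complementary column set $J$. Your Smith-normal-form normalization to $\gcd(A)=1$ is a harmless variant of this last step, but without the kernel/Hermite mechanism (or a genuinely new argument replacing it), the proof of inequality \eqref{eq:integer:equation:bound} is missing.
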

We will see how the bound \eqref{eq:integer:equation:bound} is comparable to $f_{AADLO}(A)$; in particular, the proof of Theorem \ref{thm:main} can easily be adapted to obtain the bound $f_{AADLO}(A)$.



We are interested in obtaining bounds on $f(A)$ parameterized by the number of rows and the largest absolute entry of $A$. With this in mind, define 
\begin{equation}
\begin{aligned}
    \label{eq:def:maxmin:support}
    h(m,t):=&\max\limits_{A\in \mathbb{Z}^{m\times n},\inftynorm{A}\leq t, b\in \mathcal{L}(A)}\min~\{\|x\|_0 \mid Ax = b, x \in \Z^n\}\\
    =&\max\limits_{A\in \mathbb{Z}^{m\times n},\inftynorm{A}\leq t}f(A).
\end{aligned}
\end{equation}
The following bound follows as a corollary.
\begin{theorem}
    \label{thm:integer:equation:support:bound}
    \begin{equation}
    \label{eq:log:bound}
        h(m,t)\leq O\left(\frac{m\log(\sqrt{m}~t)}{\log\log(\sqrt{m}~t)}\right). 
    \end{equation}
\end{theorem}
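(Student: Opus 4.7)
The plan is a proof by contradiction: assume $x^*$ is an integer solution to $Ax=b$ of minimum support size $s$ exceeding the bound in \eqref{eq:log:bound}, and construct an integer solution of strictly smaller support.

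\textbf{Setup.} Let $S:=\mathrm{supp}(x^*)$ and $A_S\in\mathbb{Z}^{m\times s}$ be the corresponding submatrix. After flipping the signs of those columns for which $x^*_j<0$ (so that $x^*_j>0$ on $S$) and discarding dependent rows (so that $A_S$ has full row rank $m$, not affecting $\|A\|_\infty$), set $\Lambda:=\ker(A_S)\cap\mathbb{Z}^s$, a saturated integer lattice of rank $s-m$.

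\textbf{Minimality lemma (the key obstruction).} For every $z\in\Lambda\setminus\{0\}$ and every $j\in S$ with $z_j\neq 0$, we must have $z_j\nmid x^*_j$: otherwise $c:=x^*_j/z_j\in\mathbb{Z}$ would yield the integer solution $x^*-cz$, whose $j$-th coordinate vanishes, contradicting the minimality of $s$. In particular, $|z_j|\geq 2$ whenever $z_j\neq 0$. Equivalently, writing $\pi_j(\Lambda)=d_j\mathbb{Z}$, we have $d_j\geq 2$ and $d_j\nmid x^*_j$ for every $j\in S$, so $\prod_{j\in S}d_j\geq 2^s$.

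\textbf{Pigeonhole at the critical scale.} Set $K:=\lceil\log(\sqrt{m}\,t)\rceil$ and consider the $K^s$ vectors $y\in\{0,1,\dots,K-1\}^s$. Their images under $A_S$ lie in an integer box of cardinality at most $(2(K-1)st+1)^m$. A routine calculation shows that $K^s>(2Kst+1)^m$ whenever $s\geq C\,m\log(\sqrt{m}\,t)/\log\log(\sqrt{m}\,t)$ for a sufficiently large absolute constant $C$; hence by pigeonhole two distinct elements collide, producing a nonzero $z\in\Lambda$ with $\|z\|_\infty<K$. Applying the same argument inside suitable affine slices of $\Lambda$ gives, iteratively, a whole family of short linearly independent kernel vectors, from which one extracts (via Cauchy–Binet and Hadamard applied to $A_S$) a sharp upper bound on $\prod_{j\in S}d_j$ of the form $\prod_{j\in S}d_j\leq (\sqrt{m}\,t)^{m(1+o(1))}$ at this threshold.

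\textbf{Closing the argument.} Combining the lower bound $\prod_j d_j\geq 2^s$ from the minimality lemma with the upper bound $\prod_j d_j\leq(\sqrt{m}\,t)^{m(1+o(1))}$ from the pigeonhole/linear-algebra step yields $s\log 2\leq m(1+o(1))\log(\sqrt{m}\,t)$, which contradicts the assumption $s\geq Cm\log(\sqrt{m}\,t)/\log\log(\sqrt{m}\,t)$ once the hidden $o(1)$ term is tracked at the correct scale.

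\textbf{Main obstacle.} The delicate step is the pigeonhole/iteration in Step~3: the naive single-scale pigeonhole only gives a short vector $z$ with entries in $\{-(K-1),\dots,K-1\}$, which — combined with $|z_j|\geq 2$ — does not immediately contradict anything for $K\geq 3$. The technical heart of the proof is sharpening this into a product bound on the $d_j$'s that matches $2^s$ up to the $\log\log$ factor. Calibrating the depth of iteration (equivalently, the successive minima of $\Lambda$ in $\ell_\infty$) against the determinant bound $\det(\Lambda)\leq(\sqrt{m}\,t)^m$ is precisely what supplies the $\log\log(\sqrt{m}\,t)$ factor saved over the classical bound, and doing this using \emph{only} linear algebra (Cauchy–Binet, Hadamard, Smith/Hermite normal form) rather than heavier Minkowski-geometry machinery is what makes the proof ``short'' in the sense of the abstract.
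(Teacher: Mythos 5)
There is a genuine gap, and it is fatal to the stated strategy rather than a missing technicality. Your minimality lemma (no kernel vector with a coordinate of absolute value $1$, hence $d_j\geq 2$ for $j$ in the support) is correct and is exactly the paper's Proposition~\ref{prop:int_indep}/(P3) starting point. But the closing step is logically backwards: from $2^s\leq \prod_j d_j\leq(\sqrt{m}\,t)^{m(1+o(1))}$ you get $s\leq (1+o(1))\,m\log(\sqrt{m}\,t)/\log 2$, which is an \emph{upper} bound on $s$ that is \emph{larger} than the assumed lower bound $Cm\log(\sqrt{m}\,t)/\log\log(\sqrt{m}\,t)$; the two are perfectly consistent, so no contradiction results. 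The inequality $\prod_j d_j\geq 2^s$ can never yield more than the classical $O(m\log(\sqrt{m}\,t))$ bound, no matter how sharp an upper bound on $\prod_j d_j$ you prove. The entire $\log\log$ saving must come from improving the \emph{lower} bound on the product, and your proposal contains no mechanism for that; the pigeonhole/iteration step you flag as the ``main obstacle'' produces short kernel vectors (a Siegel-lemma argument, essentially the route of \cite{aliev2018support}), which is orthogonal to what is needed.

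The missing idea is arithmetic, not geometric. Write the Hermite normal form $AU=\begin{pmatrix}D&0\end{pmatrix}$ and let $U_2$ be the last $n-m$ columns of $U$, a basis of the saturated kernel lattice $\Lambda$. Because $U$ is unimodular, the gcd of the maximal $(n-m)\times(n-m)$ minors of $U_2$ equals $1$. Each row $i$ of $U_2$ is divisible by some prime $q_i\geq 2$ (your $d_i\geq 2$), but the gcd-of-minors condition forces each fixed prime $q$ to divide at most $m$ rows of $U_2$: otherwise every nonsingular maximal minor would contain a row divisible by $q$ and the gcd of the minors would be divisible by $q$. Hence the $q_i$'s are forced to range over \emph{distinct} primes, each repeated at most $m$ times, so a suitable nonsingular maximal minor $V$ satisfies $|\det(V)|\geq p_2^m p_3^m\cdots$, which grows like $e^{(1+o(1))\,s\log(s/m)}$ by the prime number theorem --- far faster than $2^s$. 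Jacobi's identity for minors of the inverse identifies $|\det(V)|$ with $|\det(A_{[m]\times J})|/\gcd(A)$ for some $m$-subset $J$, and Hadamard's bound then gives $s\log(s/m)=O(m\log(\sqrt{m}\,t))$, i.e., the claimed $\log\log$ improvement. Without the ``each prime serves at most $m$ coordinates'' step, your argument cannot go below $m\log_2(\sqrt{m}\,t)$.
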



We intend for this paper to stand out for the strength of its result, but also for its simplicity and readability. 

\paragraph{Overview.} In Section \ref{sec:literature}, we discuss related work and how the results of this paper compare with recent developments in bounding the support size of integer solutions to systems of equations/inequalities. In Section \ref{sec:integral_independence} we introduce the notion of integral independence, which will be useful throughout the proofs of Theorems \ref{thm:integer:equation:support:bound} and \ref{thm:main}. In particular, in Section \ref{sec:bound_f}, we achieve a nontrivial lower bound on the largest $m \times m$ subdeterminant of a matrix $A$ with integrally independent columns, given by Theorem \ref{thm:main}. In Section \ref{sec:bound_h}, using the relationship between the largest subdeterminant and largest absolute entry given by the Hadamard's inequality \cite{hadamard1893resolution}, we achieve the upper bound on the number of columns, i.e. $h(m,t)$, given by Theorem \ref{thm:integer:equation:support:bound}. Finally, in Section \ref{sec:tight_example} we exhibit the matrix $A$ referred to in the latter statement of Theorem \ref{thm:main}.

\paragraph{Notation.} Throughout this paper we use $p_i$ to denote the $i$-th prime number. In particular, $p_1 =2$. For any $p,q \in \mathbb{Z}$, we use $p \mid q$ to denote that $p$ divides $q$, i.e. $q/p \in \mathbb{Z}$; similarly, we use $p \nmid q$ to denote that $p$ does not divide $q$; we may use this interchangeably with the language $q$ is (respectively, is not) divisible by $p$. We use the convention that $\gcd(a,0) = |a|$ for any $a \in \Z$, including $0$. We use the notation $[n]:=\{1,2,...,n\}$, and, for positive integers $i\leq j$, $[i:j]:=\{i,i+1,...,j\}$. For any matrix $A\in\R^{m\times n}$, and $I\subseteq [m]$, $J\subseteq [n]$, we denote by $A_{I\times J}$ the submatrix of $A$ consisting of rows indexed by $I$ and columns indexed by $J$. Denote by $\inftynorm{A}$ the infinity norm of $A$. For $x \in \R$, denote by $\rounddown{x}$ the largest integer less than or equal to $x$; $\roundup{x}$ the smallest integer greater than or equal to $x$. We use $\Z^n_{\geq 0} := \{x \in \Z^n \mid x_j \geq 0 \text{ for all } j \in [n] \}$, and $\R^n_{\geq 0}$ analogously.

\section{Related work}
\label{sec:literature}
Problem \eqref{eq:def:min:support} has garnered interest from multiple research disciplines. For example, an interesting problem is to find nonnegative integer solutions to $Ax=b$. Define the \textit{integer conic hull} generated by the columns of $A$ to be the vectors that can be represented as nonnegative integer combinations of the columns of $A$, denoted as $\intcone(A):=\{Ax\mid x\in \Z_{\geq 0}^n\}$.
Let
\begin{equation}
\begin{aligned}
    \label{eq:def:maxmin:support:ineq}
    g(m,t):=\max_{A\in \mathbb{Z}^{m\times n},~\inftynorm{A}\leq t,~ b \in \intcone(A)}\min~\{\|x\|_0 \mid Ax = b, x \in \Z_{\geq 0}^n\}.
\end{aligned}
\end{equation}
Note that $h(m,t)\leq g(m,t)$. Indeed, for any $b\in\mathcal{L}(A)$, there exists $A'$, which can be obtained upon negating some columns of $A$, such that $b\in\intcone(A')$. Take any solution $x'$ to $A'x'=b,~x'\in \Z_{\geq 0}^n$ and upon negating some coordinates of $x'$, we obtain a solution to $Ax=b,~x\in \Z^n$ of the same support as $x'$. Thus an upper bound on $g(m,t)$ can serve as an upper bound for $h(m,t)$. Eisenbrand and Shmonin \cite{eisenbrand2006caratheodory} establish the first upper bound on $g(m,t)$ using the pigeonhole principle, achieving $g(m,t)\leq 2m\log (4mt)$. This bound has been improved by \cite{aliev2018support} to $g(m,t)\leq 2m\log (2\sqrt{m}t)$ using Siegel's lemma \cite{bombieri1983siegel}.
Using $g(m,t)$ to upper bound $h(m,t)$, one can obtain $h(m,t)\leq 2m\log (2\sqrt{m}t)$, which is the best upper bound on the integer linear rank, parameterized by the number of rows and largest absolute entry of the matrix, available in the literature. Note that the bound of Theorem \ref{thm:integer:equation:support:bound} improves these bounds on $h(m,t)$ by a factor of  $O(\log\log(\sqrt{m}t))$. An asymptotic lower bound for $g(m,t)$ is also established in \cite{aliev2018support}: for any $\epsilon>0$, there exist a matrix $A\in\Z^{m\times n}$ with $n/m$ large enough and a vector $b\in\intcone(A)$, such that $\min~\{\|x\|_0 \mid Ax = b, x \in \Z_{\geq 0}^n\}\geq m\log(\inftynorm{A})^{\frac{1}{1+\epsilon}}$.

Another interesting direction is when the columns of matrix $A$ form a \textit{Hilbert basis}, i.e., for any $b$ in $\cone(A)\cap \Z^n$, $b$ is also in $\intcone(A)$, where $\cone(A):=\{Ax\mid x\in\R_{\geq 0}^n\}$. Cook et al. \cite{cook1986integer} show that when $\cone(A)$ is pointed and the columns of $A$ form an integral Hilbert basis, {the smallest support size of solutions to $Ax=b,~x\in\Z_{\geq 0}^n$ is always upper bounded by $2m-1$.} Seb\"{o} \cite{sebo1990hilbert} improves this bound to $2m-2$. Note that the bound is independent of $\inftynorm{A}$.

As far as we know, the most relevant paper in the literature is \cite{aliev2022sparse}, where they establish an upper bound for $f(A)$ and show that it is optimal. For any $z\in\Z_{>0}$, consider the prime factorization $z=q_1^{s_1}\cdots q_k^{s_k}$ such that $q_1,...,q_k$ are pairwise distinct. Aliev et al. \cite{aliev2022sparse} introduce $\Omega_m(z):=\sum_{i=1}^{k}\min\{s_i,m\}$, called \textit{truncated prime $\Omega$-funciton}. Let $\binom{[n]}{m}$ be all the subsets of $[n]$ of cardinality $m$ and for $\tau\in \binom{[n]}{m}$, let $A_{\tau}$ be the $m\times m$ submatrix of $A$ with columns indexed by $\tau$. They show 
\begin{equation}
\label{eq:bound:Omega}
    \begin{aligned}
        f(A)\leq m+\min_{\tau\in\binom{[n]}{m}, \det(A_\tau)\neq 0}\Omega_m\left(\frac{|\det(A_\tau)|}{\gcd(A)}\right).
    \end{aligned}
\end{equation}
They also show this bound is optimal in the sense that neither $m$ can be replaced by any smaller constant nor the function $\Omega_m$ can be replaced by any smaller function. 

The proof of bound \eqref{eq:bound:Omega} relies on the connection between the theory of finite Abelian groups and lattice theory. In particular, they use the \textit{primary decomposition} theorem of finite Abelian groups (see e.g. Chapter 5.2 in \cite{dummit2004abstract}) and group representation of lattices (see e.g. Section 4.4 of \cite{schrijver1998theory}). In contrast, the approach in this paper is significantly simpler, only using linear algebra. The proof is self-contained and does not require knowledge of group theory or lattice theory. Moreover, we will show how the bound \eqref{eq:bound:Omega} can be obtained as a byproduct of our proof of Theorem \ref{thm:main}.

\section{Proof of Theorems \ref{thm:main} and \ref{thm:integer:equation:support:bound}}
\label{sec:proof}

\subsection{Integral independence}
\label{sec:integral_independence}

A set of vectors $\{a_1,...,a_n\}$ is called \emph{integrally dependent} if there is some $k \in [n]$ such that $a_k = \sum_{i \not = k} \mu_i a_i$, where every $\mu_i $ is an integer, and \emph{integrally independent} otherwise. Furthermore, if $A$ is a matrix with integrally independent columns, we call the matrix itself integrally independent. 
The integral independence is closely related to the smallest support of integral solutions to an equation $Ax=b$, which we will elaborate on in the following proposition.

\begin{prop}\label{prop:int_indep}
Let $A$ be a matrix with columns $a_1,...,a_n \in \mathbb{Z}^m$. Then, the following are equivalent:
\begin{enumerate}
    \item The vectors $\{a_1,...,a_n\}$ are integrally independent;
    \item There exists a vector $b \in \mathcal{L}(A)$ such that $\zeronorm{x} = n$ for any integer solution $x$ to $Ax = b$;
    \item There is no integer vector in the null space of $A$ with an entry equal to $1$, i.e. $\{x \in \mathbb{Z}^n : Ax = 0,~ \exists j\in [n] \text{ such that } x_j = 1\} = \emptyset$.
\end{enumerate}
\end{prop}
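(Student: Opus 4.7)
The plan is to establish the equivalences via the cyclic chain (1) $\Rightarrow$ (2) $\Rightarrow$ (3) $\Rightarrow$ (1). Each implication turns out to be elementary; the proof essentially reduces to choosing the right test vector $b$ and translating integer solutions by elements of the integer null space of $A$.

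For (1) $\Rightarrow$ (2), I would take $b := A\mathbf{1} = \sum_{i=1}^n a_i$. If $x \in \mathbb{Z}^n$ satisfies $Ax = b$ and $x_j = 0$ for some $j$, then rearranging $\sum_i x_i a_i = \sum_i a_i$ yields $a_j = \sum_{i \neq j} (1 - x_i) a_i$, writing $a_j$ as an integer combination of the remaining columns and contradicting integral independence. Thus every integer solution for this $b$ must have $\|x\|_0 = n$.

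For (2) $\Rightarrow$ (3), I would argue by contrapositive: given an integer null vector $z$ with $z_j = 1$, and any integer solution $y$ to $Ay = b$ (for the $b$ provided by (2)), the vector $y - y_j z$ is still an integer solution to $Ay' = b$ but has vanishing $j$-th coordinate, contradicting the full-support conclusion of (2). For (3) $\Rightarrow$ (1), again by contrapositive: if $a_k = \sum_{i \neq k} \mu_i a_i$ with $\mu_i \in \mathbb{Z}$, then the vector $x$ defined by $x_k = -1$ and $x_i = \mu_i$ for $i \neq k$ lies in the integer null space of $A$, and $-x$ then witnesses the failure of (3), since $(-x)_k = 1$.

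The only step requiring any thought is the choice of $b = A\mathbf{1}$ in (1) $\Rightarrow$ (2): one needs a right-hand side for which every integer representation is forced to be pinned down by integral independence, and the symmetric choice $A\mathbf{1}$ achieves this because any deviation $x - \mathbf{1}$ from the "all-ones" solution automatically lies in the integer kernel of $A$. The other two implications are essentially bookkeeping once one recognizes that the three conditions are three different ways of forbidding the same "one-column" integer null-space obstruction, namely an integer vector $z$ with $Az = 0$ and $z_j \in \{-1, 1\}$ for some $j$.
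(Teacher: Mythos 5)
Your proposal is correct and follows essentially the same route as the paper: the key step, choosing $b = A\mathbf{1}$ so that any solution with a zero entry yields an integral dependency, is identical, and your remaining implications are the same elementary translations between dependency relations and integer null vectors with a $\pm 1$ entry (the paper merely organizes them as $(1)\Leftrightarrow(2)$ and $(1)\Leftrightarrow(3)$ rather than as a cycle). The only blemish is a harmless sign slip in $(1)\Rightarrow(2)$ — the identity should read $a_j = \sum_{i\neq j}(x_i-1)a_i$ — which does not affect the argument since all that matters is that the coefficients are integers.
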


\begin{proof}
We start by proving $(1) \implies (2)$. Let $b = \sum_{i \in [n]} a_i $. Consider an integral vector $x\in \mathbb{Z}^{n}$ such that $\sum_{i \in [n]} a_i x_i = \sum_{i \in [n]} a_i$. For sake of contradiction, assume some component of $x$ is zero; without loss of generality, assume $x_n = 0$. Then, $\sum_{i \in [n-1]} a_i x_i = \sum_{i \in [n]} a_i$. It follows that $a_n = - \sum_{i \in [n-1]} (1 - x_i) a_i$, where $a_n$ is an integral combination of $\{a_1,...,a_{n-1}\}$, a contradiction.

We now prove $(2) \implies (1)$. Let $b\in\mathcal{L}(A)$ be such that every integral $x$ with $Ax = b$ has $\zeronorm{x} = n$. Further let $x'$ be one such choice of $x$ and write $b = \sum_{i \in [n]} a_i x'_i $. Suppose for sake of contradiction that $\{a_1,...,a_n\}$ is integrally dependent. Then, by definition, there is some $k \in [n]$ and $\mu\in\mathbb{Z}^{n-1}$ such that $a_k = \sum_{i \not = k} \mu_i a_i$. So, we can write
$$ b = \sum_{i \not = k} a_i x'_i + \sum_{i \not = k}x'_k \mu_i a_i = \sum_{i \not = k} (x'_i + \mu_i x'_k)a_i. $$
It follows from $x'$ and $\mu$ are both integral that each $(x'_i + \mu_i x'_k)$ is integral. Then, $b$ can be written as an integral combination of $\{a_1,...,a_n\} \setminus \{a_k\}$, contradicting the fact that $\zeronorm{x}=n$ for every integral $x$ such that $Ax = b$.

We now prove $(1) \implies (3)$. Suppose there is some $x \in \mathbb{Z}^n \setminus \{0\}$ such that $\sum_{i \in n} a_i x_i = 0$; and without loss of generality, assume $x_n = 1$. Then, we can rewrite this as $a_n = -\sum_{i \in [n-1]}\frac{x_i}{x_n} a_i=\sum_{i \in [n-1]}(-x_i) a_i$. Therefore, $\{a_1,...,a_n\}$ is integrally dependent.

We now prove $(3) \implies (1)$. Suppose that $\{a_1,...,a_n\}$ is integrally dependent. Then, by definition, there exists a $k \in [n]$ such that $a_k = \sum_{i \not = k} \mu_i a_i$, where every $\mu_i$ integer. Therefore, vector \newline $\begin{pmatrix}
    -\mu_1 & -\mu_2 & ... & - \mu_{k-1} & 1 & -\mu_{k+1} & ... & -\mu_n
\end{pmatrix}$ is in the null space of $A$.
\end{proof}

\subsection{Upper bound on $f(A)$}
\label{sec:bound_f}


Applying unimodular column operations, we can bring $A$ into Hermite normal form $H=\begin{pmatrix} D & 0 \end{pmatrix}$, where $D\in \mathbb{Z}^{m\times m}$ is a lower triangular matrix. We have the relation $AU=\begin{pmatrix} D & 0 \end{pmatrix}$, where $U\in \mathbb{Z}^{n\times n}$ is a unimodular matrix corresponding to the unimodular operations (see e.g. Section 1.5.2 in~\cite{conforti2014integer}). To proceed, we remind readers of some basic facts about unimodular operations.

{Define the GCD of zeros to be zero, and note that the GCD of several numbers is zero if and only if they are all equal to zero. Recall that for $C\in\Z^{m\times n}$ with $m\le n$, $gcd(C)$ is the GCD of all $m\times m$ subdeterminants of $C$.

\begin{fact}
\label{fact:1}
    Let $C\in \Z^{m\times n}$ with $m \le n$ and $V \in \Z^{n x n}$ be a unimodular matrix. Then, $gcd(CV)=gcd(C)$. 
\end{fact}

\begin{fact}
\label{fact:2}
    $gcd(C)=0$ if and only if the rows of $C$ are linearly dependent.
\end{fact}
}

The first fact follows from the GCD of all $m\times m$ subdeterminants of $C$ being invariant under unimodular column operations (see, e.g., Section 4.4 of \cite{schrijver1998theory}). To see the second fact, we can bring $C$ into its Hermite normal form $\begin{pmatrix} C' & 0 \end{pmatrix}$ by applying unimodular column operations $V$, i.e. $CV=\begin{pmatrix} C' & 0 \end{pmatrix}$. By the first fact, the GCD of all $m\times m$ subdeterminants of $C$ equals that of $CV$, which is $\det(C')$. Thus, the GCD is $0$ if and only if $\det(C')=0$, which means the rows of $C'$ are linearly dependent. Since $V$ is unimodular and thus nonsingular, the rows of $C'$ are linearly dependent if and only if the rows of $C$ are linearly dependent. Furthermore, it may be useful to recall that the Hermite normal form of a matrix is unique. Due to these facts and the properties of unimodular matrices, we have the following lemma.

\begin{lemma}\label{lemma:propertynull}
    Let $A\in\mathbb{Z}^{m\times n}$ be a matrix with integrally independent columns. Let $H = AU$ be the Hermite normal form of $A$, where $U$ is a unimodular matrix. Suppose  $U=\begin{pmatrix} U_1 & U_2 \end{pmatrix}$, where $U_1\in \mathbb{Z}^{n\times m}, U_2\in \mathbb{Z}^{n\times (n-m)}$. Then $U_2$ must satisfy the following properties:
\begin{enumerate}
    \item[(P1).] $U_2$ has at least one nonsingular $(n-m)\times (n-m)$ submatrix.
    \item[(P2).] The GCD of all $(n-m)\times (n-m)$ subdeterminants of $U_2$ is $1$.
    \item[(P3).] For each row $i$ of $U_2$, there exists some prime $q_i \geq 2$ such that $q_i\mid (U_2)_{ij}$ for all $j \in [n-m]$.
\end{enumerate}
\end{lemma}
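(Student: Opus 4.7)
The plan is to verify each of the three properties separately: (P1) and (P2) will follow from $U$ being unimodular (plus a Laplace expansion for (P2)), while (P3) is the place where integral independence is crucial, and I will reduce it to Proposition~\ref{prop:int_indep} via Bézout's identity.

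For (P1), since $U$ is unimodular it is nonsingular, so its $n-m$ last columns (the columns of $U_2$) are linearly independent. Hence $U_2$ has rank $n-m$, which is exactly the statement that some $(n-m)\times(n-m)$ submatrix of $U_2$ is nonsingular.

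For (P2), I would apply the Laplace expansion of $\det(U)$ along its last $n-m$ columns:
\begin{equation*}
\pm 1 \;=\; \det(U) \;=\; \sum_{S\in\binom{[n]}{n-m}} \pm \det\!\bigl((U_1)_{\bar S\times [m]}\bigr)\,\det\!\bigl((U_2)_{S\times [n-m]}\bigr).
\end{equation*}
If some integer $d$ divided every $(n-m)\times (n-m)$ subdeterminant of $U_2$, then $d$ would divide the right-hand side, hence $d\mid \pm 1$, forcing $d=1$.

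For (P3), the key step is identifying the integer null space of $A$. Since $U$ is unimodular, $U^{-1}$ is integer, so every $x\in\mathbb{Z}^n$ with $Ax=0$ can be written as $x=Uy$ with $y\in\mathbb{Z}^n$; and $AU=(D\;0)$ with $D$ nonsingular then forces $y_{[1:m]}=0$. Thus the integer null space of $A$ is exactly $\{U_2 y : y\in\mathbb{Z}^{n-m}\}$. By Proposition~\ref{prop:int_indep}, integral independence of the columns of $A$ implies no integer null space vector has an entry equal to $1$. Fix a row $i$ of $U_2$, with entries $a_1,\dots,a_{n-m}$, and let $d_i=\gcd(a_1,\dots,a_{n-m})$. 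If $d_i=1$, Bézout's identity yields integers $y_1,\dots,y_{n-m}$ with $\sum_j a_j y_j = 1$; setting $y=(y_1,\dots,y_{n-m})^\top$ produces an integer null-space element $U_2 y$ whose $i$-th entry equals $1$, contradicting integral independence. Therefore $d_i\ge 2$, and any prime $q_i\mid d_i$ works. (In the degenerate case where row $i$ is identically zero, every prime divides every entry under the paper's convention, so we pick any prime.)

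The step I expect to require the most care is the identification of $\{x\in\mathbb{Z}^n : Ax=0\}$ with $U_2\mathbb{Z}^{n-m}$, since it is the bridge between the algebraic condition (integral independence of $A$'s columns) and the concrete divisibility property (P3) of the rows of $U_2$. After this structural observation, (P1) is immediate from unimodularity, (P2) is a one-line consequence of Laplace's expansion, and (P3) reduces to a clean application of Bézout.
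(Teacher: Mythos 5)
Your proof is correct. The heart of the lemma, (P3), is argued exactly as in the paper: columns of $U_2$ lie in the integer null space of $A$, so a row of $U_2$ with coprime entries would, via B\'ezout, produce an integer null vector with an entry equal to $1$, contradicting Proposition~\ref{prop:int_indep}. (Your fuller identification of the integer null space as exactly $U_2\Z^{n-m}$ is correct but not needed; only the inclusion $U_2\Z^{n-m}\subseteq\{x\in\Z^n: Ax=0\}$, i.e.\ $AU_2=0$, is used. Your handling of an all-zero row is a nice touch the paper leaves implicit.) Where you genuinely diverge is (P1) and (P2): the paper derives both from the identity $U_2^\top (U^{-1})^\top=\begin{pmatrix} I_{n-m} & 0\end{pmatrix}$, which shows the Hermite normal form of $U_2^\top$ is the identity, and then invokes the invariance of the GCD of maximal subdeterminants under unimodular operations (Facts~\ref{fact:1} and~\ref{fact:2}). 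You instead get (P1) directly from the nonsingularity of $U$ and (P2) from the generalized Laplace expansion of $\det(U)=\pm 1$ along the columns of $U_2$. Your route is more elementary and self-contained (it avoids the invariance fact entirely), at the cost of invoking the multi-column Laplace expansion; the paper's route reuses machinery it has already set up for the rest of Section~\ref{sec:bound_f}. Both are complete proofs.
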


\begin{proof}
    It follows from $U^{-1}U = I$ that $U_2^\top (U^{-1})^\top=\begin{pmatrix} I_{n-m} & 0 \end{pmatrix}$. Together with the fact that $(U^{-1})^\top$ is unimodular we obtain that the Hermite normal form of $U_2^\top$ is $\begin{pmatrix} I_{n-m} & 0 \end{pmatrix}$. According to Fact \ref{fact:1}, the $gcd(U_2^\top)=\det(I_{n-m})=1$, proving (P2). In particular, it is nonzero and thus by Fact \ref{fact:2}, $U_2$ has rank $n-m$ and therefore has a nonsingular $(n-m)\times (n-m)$ submatrix, proving (P1).
    
    It remains to prove property (P3). Since $A\begin{pmatrix} U_1 & U_2 \end{pmatrix}=\begin{pmatrix} D & 0 \end{pmatrix}$, for $D\in \mathbb{Z}^{m\times m}$ some lower triangular matrix, we notice that columns of $U_2$ are in the null space of $A$. Since the columns of $A$ are integrally independent, by Proposition \ref{prop:int_indep}, any integral $x$ in the null space of $A$ has no entry equal to $1$. Suppose for the sake of contradiction, the entries in row $i$ of $U_2$ have GCD $1$. Then, there exists an integral $\mu\in\mathbb{Z}^{n-m}$ such that $\sum_{j=1}^{n-m}\mu_j(U_2)_{ij}=1$.  Then $x = U_2 \mu$ is an integral vector in the null space of $A$ with $x_i = 1$, which contradicts the integral independence of $A$. Therefore, for any row of $U_2$, its entries must have a common divisor strictly greater than $1$, and therefore a prime that is at least $2$.
\end{proof}

The following theorem is the key to derive Theorem \ref{thm:main} of this paper. It establishes a lower bound for the largest $m\times m$ subdeterminant of a matrix with integrally independent columns. 

\begin{theorem}
\label{thm:integer:equation:support}
    Let matrix $A\in \mathbb{Z}^{m\times n}$ be of full row rank with integrally independent columns. Let $p_i$ be the $i$-th prime. Then,
    \begin{equation*}
        \frac{\Gamma(A)}{\gcd(A)}\geq p_2^m p_3^m\cdots p_{\rounddown{\frac{n}{m}}}^{m}p_{\roundup{\frac{n}{m}}}^{n-m\rounddown{\frac{n}{m}}}.
    \end{equation*}
\end{theorem}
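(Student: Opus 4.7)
My plan is to proceed in three stages. First, I would reduce the question to a statement about the matrix $U_2$ of Lemma~\ref{lemma:propertynull} by using the Hermite-normal-form relation $AU=(D,\,0)$ and Jacobi's complementary-minor identity applied to the unimodular matrix $U$: for every $m$-subset $I\subseteq[n]$ one obtains $\det(A_I)=\pm\det(D)\cdot\det((U_2)_{I^c})$. Taking the maximum and the GCD over $I$ and invoking property (P2)---that the $\gcd$ of all $(n-m)$-minors of $U_2$ equals $1$---reduces the theorem to the single inequality $\max_{|K|=n-m}|\det(U_2)_K|\geq T$, where $T$ denotes the right-hand side of~\eqref{eq:integer:equation:bound}.

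Second, let $d_i:=\gcd(\text{row } i\text{ of }U_2)$, which is at least $2$ by (P3). Since every row of $(U_2)_K$ is $d_i$ times an integer vector, $\prod_{i\in K}d_i$ divides $\det(U_2)_K$; it therefore suffices to find a basis $K^*$ of the row matroid of $U_2$ with $\prod_{i\in K^*}d_i\geq T$. To shape the multiset of $d_i$'s I would derive from (P2) that the set $R_p:=\{i:p\mid d_i\}$ has $|R_p|\leq m$ for every prime $p$, since a minor not divisible by $p$ must exist and witnesses a basis inside $[n]\setminus R_p$. Writing $q_i$ for the smallest prime factor of $d_i$, this gives $|\{i:q_i=p\}|\leq m$; sorting the $q_i$'s ascending, pigeonhole yields $q_{(i)}\geq p_{\lceil i/m\rceil}$, and a direct block-by-block computation shows that the product of the top $n-m$ of the $q_{(i)}$'s is at least $T$.

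The final and hardest step is to realize this counting estimate by an actual basis. My approach is to run the matroid greedy for a maximum-weight basis on the row matroid of $U_2$ with weights $d_i$, picking rows in decreasing order of $d_i$ whenever they extend rank. The key claim to verify is that for every $\ell\geq 2$ the rank of $S_\ell:=\{i:d_i\geq p_\ell\}$ in the row matroid is at least $\min(|S_\ell|,\,n-m)$; this forces the greedy to retain at least $n-(\ell-1)m$ rows with $d_i\geq p_\ell$, which combined with the sorted estimate on the $q_{(i)}$'s delivers $\prod_{i\in K^*}d_i\geq T$. The individual sets $[n]\setminus R_{p_j}\subseteq S_\ell$ each span (rank $n-m$) by (P2), but the intersection of several such spanning sets could a priori drop in rank; I expect this to be the main obstacle, and my plan for it is to translate any putative dependence among the rows of $U_2$ in $[n]\setminus\bigcup_{j<\ell}R_{p_j}$, via the relation $AU_2=0$, into a rank deficiency in the column-submatrix $A_{:,\,\bigcup R_{p_j}}$ of $A$, and then to rule this out by combining the integral independence of $A$'s columns with the size bound $|\bigcup_{j<\ell} R_{p_j}|\leq(\ell-1)m$.
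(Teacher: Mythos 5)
Your Stages 1 and 2 are correct and essentially identical to the paper's argument: the reduction via the Hermite normal form and Jacobi's identity to bounding a single $(n-m)$-minor of $U_2$, the divisibility $\prod_{i\in K}d_i \mid \det((U_2)_{K})$, and the bound $|R_p|\leq m$ extracted from (P2) are exactly the ingredients used there. The gap is in Stage 3, and it is a real one: the ``key claim'' that $\operatorname{rank}(S_\ell)\geq\min(|S_\ell|,\,n-m)$ for every $\ell$ is left unproved, and the route you sketch for it does not close. A rank deficiency among the rows of $U_2$ indexed by $S_\ell$ only produces a \emph{real} null vector of $A$ supported on the small complementary column set; integral independence forbids integer null vectors with an entry equal to $1$, not real dependences among few columns (already for $m=1$ the integrally independent matrix $A=(4\;\;6)$ has proportional columns). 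So the size bound $|\bigcup_{j<\ell}R_{p_j}|\leq(\ell-1)m$ together with integral independence does not exclude the rank drop you need to rule out.

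The good news is that Stage 3 is unnecessary, and this is precisely the simplification the paper exploits. You do not need a maximum-weight basis, nor a basis containing the rows with the largest $d_i$: take \emph{any} basis $K^*\subseteq[n]\setminus R_2$ (which exists by your own argument applied to $p=2$), and for each $i\in K^*$ let $q_i$ be a prime factor of $d_i$; then $q_i\geq 3$, each prime $p$ occurs at most $|R_p|\leq m$ times among $\{q_i\}_{i\in K^*}$, and $\prod_{i\in K^*}q_i$ divides $\det((U_2)_{K^*})\neq 0$. A multiset of $n-m$ primes, all at least $p_2$ and each repeated at most $m$ times, already has product at least $T$ --- this is your own block-by-block computation from Stage 2, applied to the $n-m$ rows of $K^*$ rather than to the top $n-m$ of all $n$ rows. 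In short, the multiplicity bound $|R_p|\leq m$ does all the work once the basis avoids $R_2$, and (P2) hands you such a basis for free; no matroid exchange or greedy analysis is needed.
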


To prove this theorem, we need to introduce Jacobi's formula (see Theorem 2.5.2 of \cite{prasolov1994problems}), which uses linear algebra to establish the relation between the subdeterminant of a matrix and its inverse.



\begin{lemma}{(Jacobi's formula)}\label{lemma:jacob}
Let $A\in \mathbb{R}^{n\times n}$, $A=(a_{ij})_1^n$. For any $I, J\subseteq [n]$, denote $A_{I\times J}$ be the submatrix of $A$ consisting of rows indexed by $I$ and columns indexed by $J$. Let $A^*=(a_{ij}^*)_1^n$ be the \textit{classical adjoint} of $A$, where 
$$a^*_{ij}=(-1)^{i+j}\det(A_{[n]\setminus \{j\}~\times~ [n]\setminus \{i\}})$$
is the \textit{cofactor} of element $a_{ji}$ in $A$.
Let 
$\sigma=\begin{pmatrix} 
	i_1 & i_2 & \cdots & i_n \\ 
        j_1 & j_2 & \cdots & j_n
\end{pmatrix}$ 
be an arbitrary permutation and $(-1)^\sigma$ be its sign. Let $I=\{i_1,...i_p\}$, $J=\{j_1,...,j_p\}$, $I'=\{i_{p+1},...,i_n\}$, $J'=\{j_{p+1},...,j_n\}$ for some $1\leq p\leq n-1$. Then,
$$\det(A^*_{I\times J})=(-1)^\sigma \det(A_{J'\times I'})\cdot \det(A)^{p-1}.$$
\end{lemma}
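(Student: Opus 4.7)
My plan is to reduce Jacobi's identity to a block-matrix computation in three moves: (i) reduce to an invertible $A$ by polynomial density, (ii) reduce to the trivial permutation $I=J=[p]$ and $I'=J'=[p+1:n]$ by row and column permutations, and (iii) dispatch the resulting diagonal case via a Schur complement identity.

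\textbf{Step (i).} Both sides of the identity are polynomials in the entries $a_{ij}$ of $A$: the left side is a $p\times p$ minor of $A^*$, each of whose entries is a signed $(n-1)\times(n-1)$ minor of $A$, and the right side is the product of $\det(A)^{p-1}$ with an $(n-p)\times(n-p)$ minor of $A$. Since invertible matrices form a Zariski-open dense subset of $\R^{n\times n}$, it suffices to prove the identity under the assumption $\det(A)\neq 0$.

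\textbf{Step (ii).} Let $\pi,\tau\in S_n$ be the permutations with $\pi(k)=i_k$ and $\tau(k)=j_k$, and let $P_\pi, P_\tau$ be the corresponding permutation matrices (with $P_\sigma e_k = e_{\sigma(k)}$). Set $B:=P_\tau^\top A P_\pi$, so that $B_{k\ell}=A_{j_k,i_\ell}$. Then $B_{[p+1:n]\times[p+1:n]}$ equals $A_{J'\times I'}$ with the natural orderings, and $\det(B)=\operatorname{sgn}(\pi)\operatorname{sgn}(\tau)\det(A)$. Using $A^*=\det(A)A^{-1}$ and $B^{-1}=P_{\pi^{-1}}A^{-1}P_\tau$, a short computation gives $B^*_{k\ell}=\operatorname{sgn}(\pi)\operatorname{sgn}(\tau)A^*_{i_k,j_\ell}$, whence $\det(B^*_{[p]\times[p]})=(\operatorname{sgn}(\pi)\operatorname{sgn}(\tau))^p\det(A^*_{I\times J})$. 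The permutation $\sigma$ from the statement equals $\tau\circ\pi^{-1}$, so $(-1)^\sigma=\operatorname{sgn}(\pi)\operatorname{sgn}(\tau)$; substituting the diagonal identity for $B$ and cancelling one net copy of this sign recovers the claimed identity for $A$.

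\textbf{Step (iii).} Write $A=\begin{pmatrix}A_{11}&A_{12}\\A_{21}&A_{22}\end{pmatrix}$ with $A_{11}$ of size $p\times p$ and $A_{22}$ of size $(n-p)\times(n-p)$; an additional density argument lets us further assume $A_{22}$ invertible. The Schur complement formula gives $\det(A)=\det(A_{22})\det(A_{11}-A_{12}A_{22}^{-1}A_{21})$ and $(A^{-1})_{[p]\times[p]}=(A_{11}-A_{12}A_{22}^{-1}A_{21})^{-1}$, so $\det((A^{-1})_{[p]\times[p]})=\det(A_{22})/\det(A)$. Multiplying by $\det(A)^p$ via $A^*_{[p]\times[p]}=\det(A)(A^{-1})_{[p]\times[p]}$ yields $\det(A^*_{[p]\times[p]})=\det(A_{[p+1:n]\times[p+1:n]})\det(A)^{p-1}$, the diagonal case. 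The conceptual content is this one-line Schur complement identity; the main obstacle is the sign bookkeeping in step (ii), where one must verify that the powers of $\operatorname{sgn}(\pi)\operatorname{sgn}(\tau)$ contributed by $\det(B^*_{[p]\times[p]})$ and by $\det(B)^{p-1}$ combine to leave exactly one net $(-1)^\sigma$ factor in the right position.
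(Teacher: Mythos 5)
Your proof is correct, and it is worth noting that the paper does not actually prove this lemma at all --- it is imported as a black box with a citation to Prasolov (Theorem 2.5.2), so there is no ``paper proof'' to compare against step by step. Your three-move argument (Zariski-density to assume $\det(A)\neq 0$ and $\det(A_{22})\neq 0$, conjugation by permutation matrices to reduce to $I=J=[p]$, and the Schur-complement computation $\det\bigl((A^{-1})_{[p]\times[p]}\bigr)=\det(A_{22})/\det(A)$ combined with $A^*=\det(A)A^{-1}$) is a standard and fully rigorous route, and the sign bookkeeping does close: $\det(B^*_{[p]\times[p]})$ contributes $(\mathrm{sgn}(\pi)\mathrm{sgn}(\tau))^{p}$ while $\det(B)^{p-1}$ contributes $(\mathrm{sgn}(\pi)\mathrm{sgn}(\tau))^{p-1}$, leaving exactly one factor $(-1)^{\sigma}=\mathrm{sgn}(\tau\circ\pi^{-1})$. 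The only imprecision is your claim that $B_{[p+1:n]\times[p+1:n]}$ equals $A_{J'\times I'}$ ``with the natural orderings'': it equals the submatrix with rows and columns taken \emph{in the order of the sequences} $(j_{p+1},\dots,j_n)$ and $(i_{p+1},\dots,i_n)$, which is also how the identity must be read on the left side (this is Prasolov's convention; the identity is self-consistent under reordering because $(-1)^{\sigma}$ changes accordingly). This is a convention slip rather than a gap, and it is immaterial for the paper's Corollary~\ref{cor:jacobi}, which only uses the identity up to sign. A self-contained proof like yours arguably strengthens the paper's claim that its argument uses ``only linear algebra.''
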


To prove Theorem \ref{thm:integer:equation:support}, we will use the following direct corollary of Lemma \ref{lemma:jacob}.
\begin{corollary}
\label{cor:jacobi}
    Let $U\in \mathbb{R}^{n\times n}$ be a unimodular matrix. Then for any $I,J\subseteq [n]$ with $1\leq |I|=|J|\leq n-1$,
    \begin{equation}
        \det((U^{-1})_{I\times J})=\pm \det(U_{[n]\backslash J~\times~ [n]\backslash I}),
    \end{equation}
    where $U_{I\times J}$ denotes the submatrix of $U$ consisting of rows indexed by $I$ and columns indexed by $J$.
\end{corollary}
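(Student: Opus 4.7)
The plan is to derive the corollary directly from Jacobi's identity (Lemma \ref{lemma:jacob}) by exploiting the fact that unimodularity forces $\det(U) = \pm 1$, which makes the classical adjoint $U^{*}$ coincide with $U^{-1}$ up to sign. First I would assume $|I| = |J| = p$ (otherwise the left-hand side is not a well-defined determinant of a square submatrix), and choose a permutation
$$\sigma = \begin{pmatrix} i_1 & \cdots & i_p & i_{p+1} & \cdots & i_n \\ j_1 & \cdots & j_p & j_{p+1} & \cdots & j_n \end{pmatrix}$$
whose first $p$ upper entries enumerate $I$ and whose first $p$ lower entries enumerate $J$; then by construction $I' = [n] \setminus I$ and $J' = [n] \setminus J$ in the notation of Lemma \ref{lemma:jacob}.

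Next I would invoke the standard identity $U^{*} = \det(U)\, U^{-1}$, which together with $\det(U) = \pm 1$ gives $U^{*} = \pm U^{-1}$ entrywise, and hence $\det(U^{*}_{I \times J}) = (\pm 1)^{p}\, \det((U^{-1})_{I \times J}) = \pm \det((U^{-1})_{I \times J})$. Applying Lemma \ref{lemma:jacob} to this particular $\sigma$ and $p$ yields
$$\det(U^{*}_{I \times J}) = (-1)^{\sigma}\, \det(U_{J' \times I'})\cdot \det(U)^{p-1} = \pm \det(U_{[n]\setminus J \,\times\, [n]\setminus I}),$$
since $\det(U)^{p-1} \in \{\pm 1\}$ and $(-1)^{\sigma} \in \{\pm 1\}$. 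Chaining the two equalities and absorbing all signs into the single $\pm$ gives the claimed identity.

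I expect no real obstacle here: the entire content is bookkeeping in Jacobi's identity once one observes that unimodularity collapses the $\det(U)^{p-1}$ factor to $\pm 1$ and identifies $U^{*}$ with $\pm U^{-1}$. The only mild subtlety is making sure the row/column index swap in Jacobi's formula (rows indexed by $I$, $J$ on the adjoint side correspond to columns $I$ and rows $J$ of $U$) is handled correctly, which is why the complements on the right-hand side come out as $[n]\setminus J$ for the row index set and $[n]\setminus I$ for the column index set rather than the other way around. I would state this row/column transposition explicitly to justify the index pattern in the conclusion.
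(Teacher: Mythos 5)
Your derivation is correct and is exactly the intended route: the paper states this as a ``direct corollary'' of Jacobi's identity without writing out the details, and your argument (identifying $U^{*}=\det(U)\,U^{-1}=\pm U^{-1}$, choosing $\sigma$ so that $I'=[n]\setminus I$ and $J'=[n]\setminus J$, and noting $\det(U)^{p-1}=\pm1$) supplies precisely the missing bookkeeping, including the correct row/column transposition on the right-hand side. No gaps.
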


\begin{proof}[Proof of Theorem~\ref{thm:integer:equation:support}]
Suppose by applying unimodular column operations, we bring $A$ into Hermite normal form $H=\begin{pmatrix} D & 0 \end{pmatrix}$, where $D\in \mathbb{Z}^{m\times m}$ is a lower triangular matrix. We obtain the relation of $AU=\begin{pmatrix} D & 0 \end{pmatrix}$, where $U\in \mathbb{Z}^{n\times n}$ is a unimodular matrix. Let $U=\begin{pmatrix} U_1 & U_2 \end{pmatrix}$, where $U_1\in \mathbb{Z}^{n\times m}, U_2\in \mathbb{Z}^{n\times (n-m)}$. 

Recall property (P3) of $U_2$ in Lemma~\ref{lemma:propertynull} saying that for each row $i$ of $U_2$, there exists some prime $q_i \geq 2$ such that $q_i\mid (U_2)_{ij}$ for all $j \in [n-m]$.
For any prime $q$, let $I_q:=\{i\in [n]: q\mid (U_2)_{ij}, \forall j\in [n-m]\}$ be the indices of rows of $U_2$ which are divisible by $q$. We show for any prime $q$, $U_2$ must have a nonsingular $(n-m)\times (n-m)$ submatrix $(U_2)_{I\times [n-m]}$ with $I\subseteq [n]\setminus I_q$, $|I|=n-m$. Suppose not. Then every nonsingular $(n-m)\times (n-m)$ submatrix of $U_2$ must include at least one row whose index is in $I_q$. Since such a row is divisible by $q$, the determinant of such a submatrix must also be divisible by $q$. By property (P1) in Lemma \ref{lemma:propertynull}, there exists at least one such nonsingular submatrix. Therefore, the GCD of the $\binom{n}{n-m}$ such subdeterminants is at least $q \geq 2$, contradicting property (P2) in Lemma~\ref{lemma:propertynull}. Therefore, $|I_q|\leq n-|I|=m$ for any prime $q$. 

Recall $p_i$ is the $i$-th prime and $q_i \geq 2$ is a prime that divides row $i$ of $U_2$. The above argument implies that $U_2$ has a nonsingular $(n-m)\times (n-m)$ submatrix, denoted as $V$, whose row indices are in $[n]\setminus I_{p_1}$. Assume $V$ consists of the first $n-m$ rows of $U_2$ without loss of generality. Then, we have
$$q_1q_2\cdots q_{n-m}~\Big| ~|\det(V)|.$$

Notice that each prime $p_i$ appears at most $m$ times among $\{q_1,...,q_{n-m}\}$, since $|I_q| \leq m$ for any prime $q$. Also, by the way we pick $V$, $p_1$ does not appear in $\{q_1,...,q_{n-m}\}$. Together with the fact that $\det(V)\neq 0$, this allows us to bound below $|\det(V)|$ as
$$|\det(V)|\geq q_1q_2\cdots q_{n-m}\geq p_2^m p_3^m\cdots p_{\rounddown{\frac{n}{m}}}^{m}p_{\roundup{\frac{n}{m}}}^{n-m\rounddown{\frac{n}{m}}}.$$
The last term in the inequality shows a smallest possible combination of primes satisfying the conditions above, where the first $\rounddown{\frac{n-m}{m}}=\rounddown{\frac{n}{m}}-1$ primes starting from $p_2$ each appears $m$ times, while the $\roundup{\frac{n}{m}}$-th prime appears the remainder of $n-m\rounddown{\frac{n}{m}}$ times.

Next, we use the relation between the determinants of the submatrix of $A$ and $U$ to bound above $\det(V)$. Notice $A=\begin{pmatrix} D & 0 \end{pmatrix}U^{-1}$ and thus $A_{[m]\times [n-m+1:n]}=D\cdot (U^{-1})_{[m]\times [n-m+1:n]}$. We have
\begin{equation*}
\begin{aligned}
    |\det(A_{[m]\times [n-m+1:n]})|
    = & |\det(D)|\cdot |\det((U^{-1})_{[m]\times [n-m+1:n]})|\\
    = & \gcd(A) \cdot |\det((U^{-1})_{[m]\times [n-m+1:n]})| \\
    = & \gcd(A) \cdot |\det(U_{[n-m]\times [m+1:n]})|\\
    = & \gcd(A) \cdot |\det(V)|,
\end{aligned}
\end{equation*}
where the second equality follows from Fact \ref{fact:1} and the third equality follows from Corollary~\ref{cor:jacobi}. Recall $\Gamma(A)$ is the largest absolute value of an $m\times m$ subdeterminant of $A$. Combining them together we obtain
\begin{equation*}
\begin{aligned}
    \frac{\Gamma(A)}{\gcd(A)}
    \geq &\frac{|\det(A_{[m]\times [n-m+1:n]})|}{\gcd(A)}\\
    =&|\det(V)|\\
    \geq &p_2^m p_3^m\cdots p_{\rounddown{\frac{n}{m}}}^{m}p_{\roundup{\frac{n}{m}}}^{n-m\rounddown{\frac{n}{m}}},
\end{aligned}
\end{equation*}
as desired.
\end{proof}

Recall from Proposition \ref{prop:int_indep} that a matrix $A$ has integrally independent columns if and only if there exists some $b$ such that every integer solution to $Ax=b$ has full support. Taking advantage of this observation, Theorem \ref{thm:integer:equation:support} can be naturally adapted to derive an upper bound for $f(A)$, i.e. the maximum, taken over $b\in\mathcal{L}(A)$, of smallest support size of an integer solution to $Ax=b$ (formally defined in \eqref{eq:def:min:support}). This yields the proof of one of our main results.

\begin{proof}[Proof of Theorem \ref{thm:main}]
Take $b$ such that $Ax=b$ has maximum smallest support size of an integer solution to $Ax=b$, i.e. $\min~\{\|x\|_0 \mid Ax = b, x \in \Z^n\}=f(A)$. Let $x^*$ be an integral solution to $Ax=b$ with smallest support, i.e. $\zeronorm{x^*}=f(A)$. We can assume without loss of generality that $A\in \mathbb{Z}^{m\times f(A)}$ by deleting the columns $j$ of $A$ corresponding to $x^*_j=0$ for $j\in[n]$. This will not increase $\Gamma(A)$, the largest $m\times m$ subdeterminant of $A$, and will not decrease $\gcd(A)$, the GCD of all $m\times m$ subdeterminants of $A$ (this is easily seen by the fact that redundant columns can be zeroed out using unimodular column operations). Thus it suffices to prove inequality \eqref{eq:integer:equation:bound} after deleting redundant columns of $A$. By the minimality of support size of $x^*$, any integer solution to $Ax=b$ has full support. By Proposition~\ref{prop:int_indep}, columns of $A$ are integrally independent. It follows from Theorem \ref{thm:integer:equation:support} that inequality \eqref{eq:integer:equation:bound} holds. 
\end{proof}

\begin{remark}
    We demonstrate how to modify the proof of Theorem \ref{thm:integer:equation:support} to obtain the bound \eqref{eq:bound:Omega} given in \cite{aliev2022sparse}. We use the same notation as in the proof of Theorem \ref{thm:integer:equation:support}. For any $m\times m$ submatrix of $A$ whose columns are indexed by $J$ where $|J|=m$, we have
\begin{equation*}
\begin{aligned}
    |\det(A_{[m]\times J})|
    = & |\det(D)|\cdot |\det((U^{-1})_{[m]\times J})|\\
    = & \gcd(A) \cdot |\det((U^{-1})_{[m]\times J})| \\
    = & \gcd(A) \cdot |\det(U_{[n]\backslash J ~\times~ [m+1:n]})|.
\end{aligned}
\end{equation*}
    We also know that
\begin{equation*}
    \prod_{i\in [n]\backslash J}q_i~ \Big|~|\det(U_{[n]\backslash J ~\times~ [m+1:n]})|,
\end{equation*}
and thus 
\begin{equation*}
    \prod_{i\in [n]\backslash J}q_i~ \Big|~\frac{|\det(A_{[m]\times J})|}{\gcd(A)},
\end{equation*}
where $q_i, i\in [n]\backslash J$ are prime numbers with the same prime repeating at most $m$ times in $\{q_i\mid i\in  [n]\backslash J\}$.
Recall notation $\Omega_m(z)=\sum_{i=1}^{k}\min\{s_i,m\}$ for the prime factorization of $z=r_1^{s_1}\cdots r_k^{s_k}$ with multiplicities $s_1,...,s_k\in\Z_{>0}$. Clearly, when $x\mid y$, $\Omega_m(x)\leq \Omega_m(y)$. Thus, $\Omega_m\left(\prod_{i\in [n]\backslash J}q_i\right)\leq \Omega_m\left(\frac{|\det(A_{[m]\times J})|}{\gcd(A)}\right)$. Moreover, since the multiplicity of each $q_i$ in $\prod_{i\in [n]\backslash J}q_i$ is at most $m$, we have $\Omega_m\left(\prod_{i\in [n]\backslash J}q_i\right)=|[n]\backslash J|=n-m$. Therefore, $n-m\leq \Omega_m\left(\frac{|\det(A_{[m]\times J})|}{\gcd(A)}\right)$. Since $J$ is an arbitrary subset of $[n]$ with cardinality $m$, we obtain $n\leq m+\min_{\tau\in\binom{[n]}{m}, \det(A_\tau)\neq 0}\Omega_m\Big(\frac{|\det(A_\tau)|}{\gcd(A)}\Big)$. Applying the same argument as in the proof of Theorem \ref{thm:main} above, we obtain $f(A)\leq m+\min_{\tau\in\binom{[n]}{m}, \det(A_\tau)\neq 0}\Omega_m\Big(\frac{|\det(A_\tau)|}{\gcd(A)}\Big)$.
\end{remark}

\subsection{Upper bound on $h(m,t)$}
\label{sec:bound_h}


Recall $h(m,t)$ is the maximum, taken over all integer matrices $A$ with $m$ rows and largest absolute entry $t$, of the smallest support size of an integer solution to $Ax=b$ for some $b\in \mathcal{L}(A)$ (formally defined in \eqref{eq:def:maxmin:support}). Using a relation between the size of largest absolute entry of a matrix and the size of its subdeterminant, we want to use results in Section \ref{sec:bound_f} to obtain an upper bound for $h(m,t)$. We will use the well-known Hadamard inequality~\cite{hadamard1893resolution}, which gives us an upper bound on the determinant of a matrix, i.e. for any matrix $B\in \mathbb{R}^{m\times m}$, 
$$|\det(B)|\leq (\sqrt{m}\inftynorm{B})^m.$$ 
Applying this to inequality~\eqref{eq:integer:equation:bound}, we obtain the following corollary.
\begin{corollary}
\label{cor:integer:equation:support}
    $h(m,t)$ satisfies
    \begin{equation}
    \label{eq:integer:equation:bound2}
        (\sqrt{m}~t)^m\geq p_2^m p_3^m\cdots p_{\rounddown{\frac{h(m,t)}{m}}}^{m}p_{\roundup{\frac{h(m,t)}{m}}}^{h(m,t)-m\rounddown{\frac{h(m,t)}{m}}}.
    \end{equation}
\end{corollary}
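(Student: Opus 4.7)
The plan is to reduce directly to Theorem \ref{thm:main} by choosing an extremal matrix realizing $h(m,t)$ and then bounding its largest $m\times m$ subdeterminant via Hadamard's inequality.

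First I would unfold the definition of $h(m,t)$: pick an integer matrix $A\in\Z^{m\times n}$ with $\inftynorm{A}\leq t$ and a vector $b\in\mathcal{L}(A)$ such that the smallest support of an integer solution to $Ax=b$ equals $h(m,t)$; in particular $f(A)\geq h(m,t)$, and we may take $A$ to be of full row rank (as noted in the introduction, the system is equivalent to its maximal linearly independent subsystem). Since Theorem \ref{thm:main} is monotone in $f(A)$ (the right-hand side of \eqref{eq:integer:equation:bound} is an increasing function of $f(A)$ as more prime factors are multiplied in), we may assume for simplicity $f(A)=h(m,t)$.

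Next I would apply Hadamard's inequality to every $m\times m$ submatrix $B$ of $A$: since $\inftynorm{B}\leq \inftynorm{A}\leq t$, we get $|\det(B)|\leq (\sqrt{m}\,t)^m$, and taking the maximum yields $\Gamma(A)\leq (\sqrt{m}\,t)^m$. Because $A$ is an integer matrix of full row rank, at least one $m\times m$ subdeterminant is a nonzero integer, so $\gcd(A)\geq 1$. Combining, $\Gamma(A)/\gcd(A)\leq (\sqrt{m}\,t)^m$.

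Finally I would chain with Theorem \ref{thm:main} applied to $A$:
\begin{equation*}
    (\sqrt{m}\,t)^m \;\geq\; \frac{\Gamma(A)}{\gcd(A)} \;\geq\; p_2^m p_3^m\cdots p_{\rounddown{\frac{f(A)}{m}}}^{m}p_{\roundup{\frac{f(A)}{m}}}^{f(A)-m\rounddown{\frac{f(A)}{m}}},
\end{equation*}
and substituting $f(A)=h(m,t)$ gives the claimed inequality \eqref{eq:integer:equation:bound2}. There is no substantive obstacle here; the only points requiring a line of justification are the reduction to full row rank and the monotonicity of the right-hand side of \eqref{eq:integer:equation:bound} in $f(A)$, both of which are essentially cosmetic.
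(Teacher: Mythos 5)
Your proposal is correct and follows exactly the paper's route: take an extremal pair $(A,b)$ realizing $h(m,t)$, reduce to full row rank, and chain Theorem \ref{thm:main} with Hadamard's inequality (the paper leaves the observations that $\gcd(A)\geq 1$ and $f(A)=h(m,t)$ implicit, which you spell out). No differences of substance.
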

\begin{proof}
Let $A$ be a matrix with $m$ rows, full row rank, and $\inftynorm{A}\leq t$ such that $\min\{\|x\|_0 \mid Ax = b, x \in \Z^{n}\}=h(m,t)$ for some $b\in \mathcal{L}(A)$. 
Then inequality \eqref{eq:integer:equation:bound2} follows directly from Theorem \ref{thm:main} and Hadamard's inequality.
\end{proof}

Furthermore, we can prove the upper bound on $h(m,t)$ of Theorem \ref{thm:integer:equation:support:bound} by applying prime number theorem (see e.g. \cite{hardy1979introduction} Theorem 6) to approximate the product of the first $k$ primes $\prod\limits_{i=1}^{k}p_i$.

\begin{proof}[Proof of Theorem \ref{thm:integer:equation:support:bound}]

According to the prime number theorem, the product of the first $k$ primes $\prod\limits_{i=1}^{k}p_i\sim e^{(1+o(1))k\log k}$, where $\log(\cdot)$ is the natural logarithm (this follows from the Prime Number Theorem, see Theorem 6 of \cite{hardy1979introduction} and Sequence A002110 of \cite{oeis}). Let $n=h(m,t)$. Relaxing~\eqref{eq:integer:equation:bound2}, we have
    $$(\sqrt{m}~t)^m\geq p_2^m p_3^m\cdots p_{\rounddown{\frac{n}{m}}}^{m}p_{\roundup{\frac{n}{m}}}^{n-m\rounddown{\frac{n}{m}}}\geq(p_1 p_2\cdots p_{\rounddown{\frac{n}{m}}})^{m}/2^m.$$
    Taking logarithm and applying $\prod\limits_{i=1}^{k}p_i\sim e^{(1+o(1))k\log k}$, we have
    $$\log(\sqrt{m}~t)\geq \log(p_1 p_2\cdots p_{\rounddown{\frac{n}{m}}})-\log 2\geq C\rounddown{\frac{n}{m}}\log \rounddown{\frac{n}{m}}$$ for some constant $C$.
    


    
    We claim that $\rounddown{\frac{n}{m}}=O\left(\frac{\log(\sqrt{m}~t)}{\log\log(\sqrt{m}~t)}\right)$. Suppose not, we assume $\rounddown{\frac{n}{m}}>\frac{2\log(\sqrt{m}~t)}{C\log\log(\sqrt{m}~t)}$ for some $m$ that is arbitrarily large. Thus,
    $$\log(\sqrt{m}~t)\geq C\rounddown{\frac{n}{m}}\log \rounddown{\frac{n}{m}}> \frac{2\log(\sqrt{m}~t)}{\log\log(\sqrt{m}~t)}\Big(\log\log(\sqrt{m}~t)-\log\big(C\log\log(\sqrt{m}~t)\big)\Big).$$
    This will give us 
    $$2\log\big(C\log\log(\sqrt{m}~t)\big)>\log\log(\sqrt{m}~t),$$
    which is not going to hold when $m$ is arbitrarily large, a contradiction.
    Thus, we have $\rounddown{\frac{n}{m}}=O\left(\frac{\log(\sqrt{m}~t)}{\log\log(\sqrt{m}~t)}\right)$. It follows that $n=O\left(\frac{m\log(\sqrt{m}~t)}{\log\log(\sqrt{m}~t)}\right)$.
\end{proof}

\subsection{Lower bound on $f(A)$}
\label{sec:tight_example}

Finally, we prove the latter statement of Theorem \ref{thm:main} by giving an example integer matrix $A$ with arbitrarily large number of rows $m$ and $\inftynorm{A}$, showing that the upper bound on $f(A)$ is asymptotically tight. A similar construction has appeared in \cite{aliev2018support,aliev2022sparse} to prove lower bounds on support size of integer solutions.

\begin{prop}
    For arbitrarily large $(m,t)$, there exists $A$ with $m$ rows, full row rank, and $\inftynorm{A}>t$ such that equality holds in~\eqref{eq:integer:equation:bound}. In other words, the bound in Theorem \ref{thm:main} is asymptotically tight.
\end{prop}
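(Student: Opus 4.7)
The plan is to build $A$ as a block-diagonal matrix whose diagonal blocks are row vectors of the ``drop-one-prime'' form. Fix $m$, and choose $k$ large enough that $p_2 p_3 \cdots p_k > t$. For clarity I first take $n = mk$ (the case $n = mk + r$ with $0 < r < m$ is handled by enlarging $r$ of the blocks by one entry). Let $A \in \Z^{m \times n}$ have row $i$ supported only on the columns indexed $(i-1)k+1, \ldots, ik$, and place in those positions the $k$ entries $\{\prod_{\ell \neq j} p_\ell : j \in [k]\}$; all other entries are zero. Take $b = A\mathbf{1}$.

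I would first verify that each row block $B_i$, viewed as a $1 \times k$ row vector, is integrally independent. If the $j$-th entry $a_j = \prod_{\ell \neq j} p_\ell$ were an integer combination of the others, then $p_j$ would divide the combination (since each other entry is divisible by $p_j$) but not $a_j$, a contradiction. Integral independence of $A$ as a whole then follows from Proposition \ref{prop:int_indep}: any integer kernel vector $x$ of $A$ decomposes block-wise into $x^{(1)}, \ldots, x^{(m)}$ with $B_i x^{(i)} = 0$, so if some coordinate of $x$ equaled $1$ it would lie in some block $i$ and give a kernel element of $B_i$ with an entry equal to $1$, contradicting that block's integral independence. Proposition \ref{prop:int_indep} then forces $f(A) = n = mk$. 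Full row rank is immediate from the pairwise disjoint row-supports.

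Next, I would compute $\Gamma(A)$ and $\gcd(A)$ via the block-diagonal structure. An $m \times m$ submatrix of $A$ can have nonzero determinant only if it selects exactly one column from each of the $m$ blocks (otherwise some row of the submatrix is identically zero); in that case the submatrix is diagonal and its determinant is $\pm$ the product of the selected entries. Hence
\[
\Gamma(A) = \prod_{i=1}^m \max_j |B_i[j]| = (p_2 p_3 \cdots p_k)^m,
\]
the maximum in each block being realized by dropping the smallest prime $p_1 = 2$, and a short check gives $\gcd(A) = \prod_{i=1}^m \gcd(B_i) = 1$, since in each block every prime $p_j$ fails to divide exactly one entry, so for any prime one can choose a representative per block not divisible by it. Therefore
\[
\frac{\Gamma(A)}{\gcd(A)} = p_2^m p_3^m \cdots p_k^m,
\]
which equals the right-hand side of \eqref{eq:integer:equation:bound}, since $\rounddown{f(A)/m} = k$ and $f(A) - m\rounddown{f(A)/m} = 0$. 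Because $\inftynorm{A} \geq p_2 \cdots p_k > t$ by the choice of $k$, the construction supplies the required example for arbitrarily large $(m,t)$; the general-$r$ case is handled identically with $r$ blocks of size $k+1$ contributing an extra factor $p_{k+1}^r$ on both sides.

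The main obstacle I expect is the block-diagonal $\gcd$ identity $\gcd(A) = \prod_{i} \gcd(B_i)$: it rests on the observation that for each prime $p$, independently choosing in each block a representative not divisible by $p$ yields a subdeterminant not divisible by $p$. Once that bookkeeping is in place, the rest is a direct computation from the drop-one-prime identity.
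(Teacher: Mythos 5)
Your construction is exactly the matrix used in the paper's proof (block-diagonal with row $i$ carrying the entries $p_1\cdots p_k/p_r$), and your divisibility argument for full support is the same one, merely routed through Proposition \ref{prop:int_indep} rather than applied directly to $b=A\mathbf{1}$; the computation of $\Gamma(A)$ is identical. The proposal is correct, and your explicit verification that $\gcd(A)=1$ is a small but welcome addition that the paper leaves implicit.
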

\begin{proof}
    Let $p_i$ be the $i$-th prime. There exists $k$ such that $p_2p_3\cdots p_k>t$. Let $n=km$. Define $A\in\mathbb{Z}^{m\times n}$ as
    \begin{equation*}
        \begin{aligned}
            A_{ij}=\begin{cases}
                p_1p_2\cdots p_k/p_r & ~~j=(i-1)k+r,~ 1\leq r \leq k\\
                ~~~0 & ~~\text{otherwise}
            \end{cases}
        \end{aligned}
    \end{equation*}
    and $b=A\mathbf{1}$. Then, $\inftynorm{A} = p_2p_3\cdots p_k>t$. For any $i\in[m]$, $b_i=\sum_{r=1}^{k}p_1p_2\cdots p_k/p_r$. Observe that $p_j\nmid b_i$, for any $i=1,...,m, j=1,...,k$. Any $x\in\Z^n$ with $\zeronorm{x}<n$ would have $x_j=0$ for some $j=(i-1)k+r$ with $1\leq r\leq k$. Thus, $(Ax)_i=\sum_{s=1,s\neq r}^{k}(p_1p_2\cdots p_k/p_s) x_{(i-1)k+s}$. Since $p_r\mid (p_1p_2\cdots p_k/p_s)$, $\forall s\neq r$, we have $p_r\mid (Ax)_i$. Thus, $Ax\neq b$, which means any integer $x$ with $\zeronorm{x}<n$ is not a solution to $Ax=b$. Therefore, any integer solution of $Ax=b$ has smallest support size $n=km$. Due to the block structure of $A$, the Hermite normal form of $A$ is $\begin{pmatrix} I_m & 0 \end{pmatrix}$ since $\gcd(\{A_{ij}:j=(i-1)k+r, 1\leq r\leq k\})=1, \forall i\in [m]$. Therefore, $\gcd(A)=\det(I_m)=1$. Moreover,
    $\Gamma(A)=\max\limits_{B} ~\{|\det(B)|\mid \text{$B$ is an $m\times m$ submatrix of $A$}\}=p_2^mp_3^m\cdots p_k^m$, matching the right hand side in~\eqref{eq:integer:equation:bound}.
\end{proof}


\section*{Acknowledgements}
This work was supported by ONR grant N00014-22-1-2528 and a Balas PhD Fellowship. We acknowledge the invaluable discussions with Ahmad Abdi, Gérard Cornuéjols, Levent Tunçel, and Anthony Karahalios without whom this project would not have been possible. We thank Ahmad Abdi and Timm Oertel for feedback that very much improved the presentation of these results. We also thank two anonymous reviewers for carefully reading the paper and giving useful suggestions.


\bibliographystyle{alpha} 
\bibliography{reference} 

\newcommand{\etalchar}[1]{$^{#1}$}
\begin{thebibliography}{AADLO22}

\bibitem[AADLO22]{aliev2022sparse}
Iskander Aliev, Gennadiy Averkov, Jes{\'u}s~A De~Loera, and Timm Oertel.
\newblock Sparse representation of vectors in lattices and semigroups.
\newblock {\em Mathematical Programming}, 192(1-2):519--546, 2022.

\bibitem[ADLE{\etalchar{+}}18]{aliev2018support}
Iskander Aliev, Jes{\'u}s~A De~Loera, Friedrich Eisenbrand, Timm Oertel, and
  Robert Weismantel.
\newblock The support of integer optimal solutions.
\newblock {\em SIAM Journal on Optimization}, 28(3):2152--2157, 2018.

\bibitem[BV83]{bombieri1983siegel}
Enrico Bombieri and Jeffrey Vaaler.
\newblock On siegel's lemma.
\newblock {\em Inventiones mathematicae}, 73(1):11--32, 1983.

\bibitem[CCZ{\etalchar{+}}14]{conforti2014integer}
Michele Conforti, G{\'e}rard Cornu{\'e}jols, Giacomo Zambelli, et~al.
\newblock {\em Integer programming}, volume 271.
\newblock Springer, 2014.

\bibitem[CFS86]{cook1986integer}
William Cook, Jean Fonlupt, and Alexander Schrijver.
\newblock An integer analogue of caratheodory's theorem.
\newblock {\em Journal of Combinatorial Theory, Series B}, 40(1):63--70, 1986.

\bibitem[DF04]{dummit2004abstract}
David~Steven Dummit and Richard~M Foote.
\newblock {\em Abstract algebra}, volume~3.
\newblock Wiley Hoboken, 2004.

\bibitem[ES06]{eisenbrand2006caratheodory}
Friedrich Eisenbrand and Gennady Shmonin.
\newblock Carath{\'e}odory bounds for integer cones.
\newblock {\em Operations Research Letters}, 34(5):564--568, 2006.

\bibitem[Had93]{hadamard1893resolution}
Jacques Hadamard.
\newblock Resolution d'une question relative aux determinants.
\newblock {\em Bull. des sciences math.}, 2:240--246, 1893.

\bibitem[HW79]{hardy1979introduction}
Godfrey~Harold Hardy and Edward~Maitland Wright.
\newblock {\em An introduction to the theory of numbers}.
\newblock Oxford university press, 1979.

\bibitem[{OEI}]{oeis}
{OEIS Foundation Inc.}
\newblock The {O}n-{L}ine {E}ncyclopedia of {I}nteger {S}equences.
\newblock Published electronically at \url{https://oeis.org}.

\bibitem[Pra94]{prasolov1994problems}
Viktor~Vasil'evich Prasolov.
\newblock {\em Problems and theorems in linear algebra}, volume 134.
\newblock American Mathematical Soc., 1994.

\bibitem[Sch98]{schrijver1998theory}
Alexander Schrijver.
\newblock {\em Theory of linear and integer programming}.
\newblock John Wiley \& Sons, 1998.

\bibitem[Seb90]{sebo1990hilbert}
Andr{\'a}s Seb{\"o}.
\newblock Hilbert bases, caratheodory's theorem and combinatorial optimization.
\newblock In {\em Proceedings of the 1st Integer Programming and Combinatorial
  Optimization Conference}, pages 431--455, 1990.

\end{thebibliography}

\end{document}